\documentclass[pdflatex,sn-mathphys-num]{sn-jnl}

\usepackage{amsfonts,amssymb,mathtools}
\usepackage{booktabs,array,microtype}
\usepackage{needspace,placeins}
\usepackage[nameinlink,capitalize,noabbrev]{cleveref}

\AddToHook{cmd/section/before}{\FloatBarrier\Needspace{7\baselineskip}}
\AddToHook{env/theorem/before}{\Needspace{9\baselineskip}}
\AddToHook{env/lemma/before}{\Needspace{7\baselineskip}}
\AddToHook{env/proposition/before}{\Needspace{9\baselineskip}}

\allowdisplaybreaks[2]
\numberwithin{equation}{section}
\theoremstyle{thmstyleone}
\newtheorem{theorem}{Theorem}[section]
\newtheorem{lemma}[theorem]{Lemma}
\newtheorem{proposition}[theorem]{Proposition}

\theoremstyle{thmstylethree}
\newtheorem{remark}[theorem]{Remark}

\crefname{theorem}{Theorem}{Theorems}
\Crefname{theorem}{Theorem}{Theorems}
\crefname{lemma}{Lemma}{Lemmas}
\Crefname{lemma}{Lemma}{Lemmas}
\crefname{proposition}{Proposition}{Propositions}
\Crefname{proposition}{Proposition}{Propositions}
\crefname{corollary}{Corollary}{Corollaries}
\Crefname{corollary}{Corollary}{Corollaries}
\crefname{remark}{Remark}{Remarks}
\Crefname{remark}{Remark}{Remarks}

\newcommand{\R}{\mathbb R}
\newcommand{\N}{\mathbb N}
\newcommand{\abs}[1]{\lvert#1\rvert}
\newcommand{\norm}[1]{\lVert#1\rVert}
\newcommand{\ip}[2]{\left\langle #1,#2\right\rangle}

\newcommand{\ind}{\mathbf 1}
\newcommand{\dd}{\,\mathrm d}
\newcommand{\wh}{\widehat}
\DeclareMathOperator{\diver}{div}

\newcommand{\revise}[1]{#1}

\title[An improved individual Li--Yau coefficient]
{%Toward P\'{o}lya's conjecture: 
Improved leading coefficient in the individual Berezin--Li--Yau bound via energy orthogonality}

\author[1]{\fnm{Yifan} \sur{Wang}}\email{wangyifan@lsec.cc.ac.cn}
\author*[2,3]{\fnm{Hehu} \sur{Xie}}\email{hhxie@lsec.cc.ac.cn}

\affil[1]{\orgdiv{School of Statistics and Mathematics},
  \orgname{Central University of Finance and Economics},
  \orgaddress{\city{Beijing}, \postcode{102206}, \country{China}}}

\affil*[2]{\orgdiv{SKLMS, NCMIS, Institute of Computational Mathematics},
  \orgname{Academy of Mathematics and Systems Science, Chinese Academy of Sciences},
  \orgaddress{\street{No.~55 Zhongguancun Donglu}, \city{Beijing},
  \postcode{100190}, \country{China}}}

\affil[3]{\orgdiv{School of Mathematical Sciences},
  \orgname{University of Chinese Academy of Sciences},
  \orgaddress{\city{Beijing}, \postcode{100049}, \country{China}}}

\begin{document}
\abstract{Let $\lambda_k$ be the $k$th eigenvalue of the Dirichlet
Laplacian on an open set $\Omega\subset\mathbb R^n$ of finite positive
measure.  The direct individual consequence of the Berezin--Li--Yau sum
inequality has leading coefficient $n/(n+2)$ relative to the Weyl term.
The main contribution of this paper is a strict improvement of this
coefficient. Energy orthogonality gives a frequency-dependent cap on
the Fourier density of the first $k$ eigenfunctions.  Combining this cap
with the standard Bessel bound and a bathtub principle for the radial
capacity
yields
\[
 \lambda_k\geq c_n(2\pi)^2\omega_n^{-2/n}
 |\Omega|^{-2/n}k^{2/n},
 \qquad \frac{n}{n+2}<c_n<1,
\]
for every $k\geq1$ and every $n\geq2$, without boundary regularity.
The constants are characterized by explicit scalar equations; in
dimension two, $c_2=0.5383068077\ldots$, giving a $7.66\%$ improvement
over the individual Li--Yau coefficient. We emphasize that this improves
the leading coefficient in the individual eigenvalue bound and the new constant $c_n$ 
is independent of the geometry and index $k$.} 
%not the sharp leading coefficient in the Li--Yau sum inequality. 
%We also compare the
%result with representative geometric, Riesz-mean, spectral-gap, and
%special-domain estimates.}

\keywords{Dirichlet eigenvalues, Li--Yau inequality, P\'{o}lya conjecture,
Fourier envelope, energy orthogonality, bathtub principle}

\pacs[MSC Classification]{35P15, 35P20, 42B37}

\maketitle

\tableofcontents
\clearpage

\section{Introduction}\label{sec:introduction}

Let $\Omega\subset\R^n$ be an open set with finite positive measure
$V=|\Omega|$, where $n\geq2$. We write
$\omega_n=|B_1(0)|$ and
\begin{equation}\label{eq:weyl-constant}
 C_n=(2\pi)^2\omega_n^{-2/n}.
\end{equation}
The eigenvalues of the Dirichlet Laplacian, listed in nondecreasing
order and repeated according to multiplicity, satisfy
\[
0<\lambda_1\leq\lambda_2\leq\cdots\nearrow\infty.
\]
Weyl's law gives the asymptotic formula (see, for example,
\cite{SafarovVassiliev1997})
\[
\lambda_k\sim C_nV^{-2/n}k^{2/n}
\qquad\text{as }k\to\infty.
\]
P\'olya's conjecture~\cite{Polya1961} asks whether the Weyl term is a
termwise lower bound,
\begin{equation}\label{eq:polya}
 \lambda_k\geq C_nV^{-2/n}k^{2/n},
\end{equation}
for every $k$. The conjecture remains open for general domains. The
classical Berezin~\cite{Berezin1972} and Li--Yau inequality~\cite{LiYau1983} gives
\begin{equation}\label{eq:li-yau-sum}
 \sum_{j=1}^k\lambda_j
 \geq \frac{n}{n+2}C_nV^{-2/n}k^{1+2/n},
\end{equation}
and the monotonicity of the spectrum yields the individual consequence
\begin{equation}\label{eq:li-yau-individual}
 \lambda_k\geq \frac{n}{n+2}C_nV^{-2/n}k^{2/n}.
\end{equation}

The distinction between \cref{eq:li-yau-sum,eq:li-yau-individual} is
central to this paper.  The coefficient $n/(n+2)$ is asymptotically 
sharp in the Li--Yau \emph{sum} inequality, but the passage from the sum to the
individual estimate uses only
$\sum_{j=1}^k\lambda_j\leq k\lambda_k$ and loses spectral information.
Our main result improves the coefficient of the leading Weyl-scale term
in the resulting \emph{individual} bound:
\begin{equation}\label{eq:intro-main-result}
 \lambda_k\geq c_nC_nV^{-2/n}k^{2/n},
 \qquad \frac{n}{n+2}<c_n<1.
\end{equation}
Thus the principal term retains the correct dependence on $V$ and $k$
while its universal coefficient is strictly larger than the direct
Li--Yau coefficient.  This leading-coefficient improvement is the main
contribution of the article.

The mechanism is a second Fourier-space constraint.  The usual
$L^2$ Bessel inequality bounds the spectral density by a constant.
After energy normalization, the gradients of Dirichlet eigenfunctions
form an orthonormal family.  Writing a plane wave as the divergence of
an explicit vector field and integrating weakly by parts gives an
additional cap that decays like $|\xi|^{-2}$.  Imposing both caps before
minimizing the Fourier second moment produces a strictly stronger
individual estimate.  The resulting variational problem is radial and
reduces to an explicit scalar equation.
The two-dimensional case is treated separately because the radial mass
of the $|\xi|^{-2}$ tail is logarithmic.

The result is universal: no smoothness, convexity, tiling property, or
connectedness assumption is imposed on $\Omega$.  It does not prove
P\'{o}lya's conjecture, since $c_n<1$. 
%and it does not improve the sharp
%leading coefficient in \cref{eq:li-yau-sum}. 
These distinctions are important when comparing the theorem with geometric remainder terms,
Riesz-mean estimates, spectral-gap inequalities, and coefficient-one
results available for special domains.  Possible refinements include
the inertia corrections of Melas~\cite{Melas2003} and
Ilyin~\cite{Ilyin2010}, boundary-sensitive estimates
\cite{KVW2009,GLW2011}, recent Riesz-mean and gap improvements
\cite{FrankLarson2025,FrankLarson2026,FLP2025,Steinerberger2024}, and results related to
P\'{o}lya's conjecture on subsequences or special geometries
\cite{GJL2025,FLPSBalls2023,HeWang2026,JiangLin2026}.

The paper is organized as follows.  \Cref{sec:preliminaries} records the
Fourier identities and the radial-capacity bathtub principle.
\Cref{sec:envelope} derives the improved Fourier envelope from energy
orthogonality.  \Cref{sec:main-results} proves
\cref{eq:intro-main-result}, treats the two-dimensional logarithmic
case, and compares the constants numerically.  %\Cref{sec:comparison}
%explains precisely which leading term is improved and places the result
%among representative estimates in the literature.
\Cref{sec:questions} summarizes the conclusions and discusses possible
routes toward the P\'{o}lya coefficient.

\section{Preliminaries}\label{sec:preliminaries}

We begin by recording the spectral and Fourier identities on which the
argument rests. Let $\{\phi_j\}_{j\geq1}$ be a real-valued orthonormal basis of
Dirichlet eigenfunctions in $L^2(\Omega)$. We use complex Hilbert spaces
when pairing these functions with plane waves, with inner product linear
in its first argument. Thus
\begin{equation}\label{eq:eigenproblem}
-\Delta\phi_j=\lambda_j\phi_j,
\qquad \phi_j\in H_0^1(\Omega),
\qquad \int_\Omega\phi_i\phi_j\dd x=\delta_{ij}.
\end{equation}
We use the unitary Fourier transform
\begin{equation}\label{eq:fourier}
\wh u(\xi)=(2\pi)^{-\frac{n}{2}}
\int_{\R^n}u(x)e^{-\mathrm{i}x\cdot\xi}\dd x,
\end{equation}
where every function in $H_0^1(\Omega)$ is extended by zero to $\R^n$. The
Fourier density associated with the first $k$ eigenfunctions is
\begin{equation}\label{eq:density}
F_k(\xi)=\sum_{j=1}^k\abs{\wh\phi_j(\xi)}^2.
\end{equation}
Plancherel's theorem and the eigenvalue equation then give the two basic
identities
\begin{equation}\label{eq:mass-moment}
\int_{\R^n}F_k(\xi)\dd\xi=k,
\qquad
\int_{\R^n}\abs\xi^2F_k(\xi)\dd\xi
=\sum_{j=1}^k\lambda_j.
\end{equation}
Thus the spectral problem has been converted into a constrained moment problem:
the total mass of $F_k$ is known exactly, its second moment is the eigenvalue
sum, and the remaining task is to obtain the sharpest available pointwise
majorant.

Before deriving that majorant, we formulate the rearrangement principle that
will turn it into a lower bound for the second moment. The following statement
is a radial-obstacle version of the standard bathtub principle; see
Lieb and Loss~\cite{LiebLoss2001}.

\begin{lemma}[Bathtub principle under a radial envelope]\label[lemma]{lem:bathtub}
Let $h:[0,\infty)\to[0,\infty)$ be measurable and nonincreasing, and suppose
that $h(\abs\cdot)\in L^1_{\mathrm{loc}}(\R^n)$. Let $0\leq M<\infty$ and
assume that
\[
\int_{\R^n}h(\abs\xi)\dd\xi\geq M.
\]
If a measurable function $f:\R^n\to[0,\infty)$ satisfies
\[
0\leq f(\xi)\leq h(\abs\xi)\quad\text{a.e.},
\qquad
\int_{\R^n}f(\xi)\dd\xi=M,
\]
then there exists $R\in[0,\infty]$ such that
\[
\int_{B_R}h(\abs\xi)\dd\xi=M.
\]
Consequently, if
\[
g_R(\xi):=h(\abs\xi)\ind_{B_R}(\xi),
\]
then
\begin{equation}\label{eq:bathtub}
\int_{\R^n}\abs\xi^2f(\xi)\dd\xi
\geq
\int_{B_R}\abs\xi^2h(\abs\xi)\dd\xi.
\end{equation}
The integrals in \cref{eq:bathtub} may take the value $+\infty$.
\end{lemma}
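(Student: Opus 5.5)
The plan has two parts: first produce the radius $R$ by a continuity argument, then obtain \cref{eq:bathtub} from a pointwise sign inequality.

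\emph{Existence of $R$.} Set
\[
\Phi(r)=\int_{B_r}h(\abs\xi)\dd\xi\qquad\text{for }r\in[0,\infty],
\]
with $\Phi(\infty)=\int_{\R^n}h(\abs\xi)\dd\xi\in[0,\infty]$. Local integrability makes $\Phi$ finite for finite $r$. The function $\Phi$ is nondecreasing with $\Phi(0)=0$. It is continuous on $[0,\infty)$ by dominated convergence, since spheres have measure zero. Monotone convergence gives $\Phi(r)\to\Phi(\infty)$ as $r\to\infty$. Since $M\le\Phi(\infty)$, there are two cases.
\begin{itemize}
\item If $\Phi(\infty)=M$, take $R=\infty$.
\item If $\Phi(\infty)>M$, some finite $r$ has $\Phi(r)>M$, and the intermediate value theorem gives $R\in[0,r]$ with $\Phi(R)=M$.
\end{itemize}
The hypothesis on $f$ is not needed for this step. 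It enters only through $\int f=M$ in the second step.

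\emph{Moment inequality.} If $R=\infty$, then $\int(h-f)=0$ with $h-f\ge0$, so $f=h$ a.e. and \cref{eq:bathtub} holds with equality. The case $R=0$ is trivial. So assume $0<R<\infty$. The key pointwise inequality is
\[
(\abs\xi^2-R^2)\bigl(f(\xi)-g_R(\xi)\bigr)\ge0 \quad\text{a.e.}
\]
Inside $B_R$ we have $f-g_R=f-h\le0$ and $\abs\xi^2-R^2<0$. Outside $B_R$ we have $f-g_R=f\ge0$ and $\abs\xi^2-R^2\ge0$. Monotonicity of $h$ is not even needed here; it only makes $g_R$ the natural extremizer.

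Both $f$ and $g_R$ have total mass $M$. It is tempting to integrate the pointwise inequality and cancel the $R^2$ terms, but $\int\abs\xi^2f$ may be infinite, so I would split the domain instead to avoid $\infty-\infty$.
\begin{itemize}
\item On $B_R$, everything is finite because $\int_{B_R}\abs\xi^2h\le R^2M$. There we have $\int_{B_R}\abs\xi^2(h-f)\le R^2\int_{B_R}(h-f)$.
\item On the complement, $\int_{B_R^c}\abs\xi^2 f\ge R^2\int_{B_R^c}f$.
\item The mass identity gives $\int_{B_R}(h-f)=\int_{B_R^c}f$.
\end{itemize}
Combining these,
\[
\int\abs\xi^2f=\int_{B_R}\abs\xi^2f+\int_{B_R^c}\abs\xi^2f\ge \int_{B_R}\abs\xi^2h-R^2\int_{B_R^c}f+R^2\int_{B_R^c}f,
\]
which is \cref{eq:bathtub}. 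All quantities on the right are finite, so the infinite case is handled automatically.

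\emph{Main obstacle.} The only delicate point is bookkeeping with possibly infinite integrals and the endpoint case $R=\infty$. The splitting above is designed to handle both.
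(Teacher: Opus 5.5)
Your proposal is correct and follows essentially the same route as the paper: an intermediate-value argument for the filling radius, the pointwise sign relation $(\abs\xi^2-R^2)(f-g_R)\geq0$, and the observation that $f=h$ a.e.\ when $R=\infty$. The only difference is bookkeeping. The paper treats an infinite second moment of $f$ as a separate trivial case and otherwise integrates the sign relation directly, whereas your split over $B_R$ and its complement handles both cases at once.
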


\begin{proof}
The proof consists of three steps.

\medskip
\noindent\emph{Step 1: determination of the filling radius.}
Define the cumulative mass
\[
H(r):=\int_{B_r}h(\abs\xi)\dd\xi,
\qquad r\in[0,\infty).
\]
Since $h(\abs\cdot)\in L^1_{\mathrm{loc}}(\R^n)$, one has $H(r)<\infty$
for every finite $r$. Polar coordinates give
\begin{equation}\label{eq:H-polar}
H(r)=n\omega_n\int_0^r h(s)s^{n-1}\dd s.
\end{equation}
It follows that $H$ is absolutely continuous on every finite interval; in
particular, it is continuous and nondecreasing. Moreover, the monotone
convergence theorem yields
\[
\lim_{r\to\infty}H(r)
=\int_{\R^n}h(\abs\xi)\dd\xi\geq M.
\]
If $M=0$, we take $R=0$. Suppose henceforth that $M>0$. If $H(R)=M$ for some
finite $R$, we choose such an $R$. Otherwise, continuity forces
\[
H(r)<M\quad\text{for every finite }r,
\qquad
\lim_{r\to\infty}H(r)=M,
\]
and we set $R=\infty$. In either case,
\begin{equation}\label{eq:equal-mass}
\int_{\R^n}g_R(\xi)\dd\xi
=\int_{B_R}h(\abs\xi)\dd\xi=M
=\int_{\R^n}f(\xi)\dd\xi.
\end{equation}

\medskip
\noindent\emph{Step 2: comparison when $R<\infty$.}
Inside $B_R$, the bound $f\leq h$ implies
\[
f(\xi)-g_R(\xi)=f(\xi)-h(\abs\xi)\leq0,
\qquad
\abs\xi^2-R^2\leq0.
\]
Outside $B_R$, one has $g_R=0$, and hence
\[
f(\xi)-g_R(\xi)=f(\xi)\geq0,
\qquad
\abs\xi^2-R^2\geq0.
\]
Combining the two regions gives the pointwise sign relation
\begin{equation}\label{eq:pointwise-sign}
\bigl(\abs\xi^2-R^2\bigr)
\bigl(f(\xi)-g_R(\xi)\bigr)\geq0
\quad\text{a.e. in }\R^n.
\end{equation}
Furthermore, local integrability of the envelope implies
\[
\int_{\R^n}\abs\xi^2g_R(\xi)\dd\xi
\leq R^2\int_{B_R}h(\abs\xi)\dd\xi=R^2M<\infty.
\]
If the second moment of $f$ is infinite, the desired conclusion is immediate.
Otherwise, both second moments are finite, and integration of
\cref{eq:pointwise-sign}, together with the equal-mass identity
\cref{eq:equal-mass}, yields
\begin{align*}
0
&\leq\int_{\R^n}
\bigl(\abs\xi^2-R^2\bigr)
\bigl(f(\xi)-g_R(\xi)\bigr)\dd\xi\\
&=\int_{\R^n}\abs\xi^2
\bigl(f(\xi)-g_R(\xi)\bigr)\dd\xi
-R^2\int_{\R^n}\bigl(f(\xi)-g_R(\xi)\bigr)\dd\xi\\
&=\int_{\R^n}\abs\xi^2f(\xi)\dd\xi
-\int_{\R^n}\abs\xi^2g_R(\xi)\dd\xi.
\end{align*}
This is precisely \cref{eq:bathtub}.

\medskip
\noindent\emph{Step 3: the case $R=\infty$.}
In this case $g_R(\xi)=h(\abs\xi)$, while \cref{eq:equal-mass} gives
\[
0\leq h(\abs\xi)-f(\xi),
\qquad
\int_{\R^n}\bigl(h(\abs\xi)-f(\xi)\bigr)\dd\xi=0.
\]
It follows that $f(\xi)=h(\abs\xi)$ almost everywhere. Hence
\cref{eq:bathtub} holds with equality, completing the proof.
\end{proof}

\begin{remark}[Partial filling on the boundary sphere]\label{rem:sphere}
For Lebesgue measure, the sphere $\partial B_R$ has zero $n$-dimensional
measure. Changing the value of $f$ on the sphere therefore neither changes nor
adjusts its mass.
\revise{
If the prescribed mass is strictly smaller than the total
capacity, continuity of $H$ guarantees a finite filling radius
with $H(R)=M$, without any boundary filling. If the prescribed
mass equals the total capacity, the filling radius may be
infinite, as allowed in Lemma~\ref{lem:bathtub}.
In more general bathtub problems, partial filling may be needed
when a level set of the cost function has positive measure
with respect to the capacity measure.
}

\end{remark}

\begin{remark}[Equality]\label{rem:equality}
If $R<\infty$, equality in \cref{eq:bathtub} requires
\[
\bigl(\abs\xi^2-R^2\bigr)
\bigl(f(\xi)-g_R(\xi)\bigr)=0
\quad\text{a.e.}
\]
Since $\partial B_R$ has measure zero, this forces
\[
f(\xi)=h(\abs\xi)\quad\text{a.e. in }B_R,
\qquad
f(\xi)=0\quad\text{a.e. in }\R^n\setminus B_R.
\]
Thus, among all densities of mass $M$ lying below the prescribed capacity
$h(\abs\xi)$, the second moment is minimized by filling the available capacity
from the frequency origin outward.
\end{remark}

The lemma isolates the variational part of the argument. What remains is to
identify a useful radial envelope for $F_k$. The usual $L^2$ argument supplies
a constant cap, whereas the energy inner product supplies a second cap that
decays at high frequency. Their minimum is the crucial input in the next
section.

\section{The Frequency Envelope from Energy Orthogonality}
\label{sec:envelope}

We now derive the main Fourier-space estimate. The first half is the familiar
$L^2$ Bessel bound. The second half uses the Dirichlet condition through weak
integration by parts and is responsible for the improvement.

\begin{proposition}[Improved Fourier projection estimate]\label[proposition]{prop:envelope}
For every $k\in\N$ and almost every $\xi\in\R^n$,
\begin{equation}\label{eq:projection-envelope}
F_k(\xi)
\leq \frac{V}{(2\pi)^n}
\min\left\{1,\frac{\lambda_k}{\abs\xi^2}\right\},
\end{equation}
where only the first term is used at $\xi=0$.
\end{proposition}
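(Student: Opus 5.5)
The estimate \cref{eq:projection-envelope} is the conjunction of two Bessel inequalities in two different Hilbert structures, and I would prove each cap separately. Fix $\xi$ and let $e_\xi(x)=(2\pi)^{-n/2}e^{\mathrm{i}x\cdot\xi}$. Since each $\phi_j$ is extended by zero, $\wh\phi_j(\xi)=\ip{\phi_j}{e_\xi\ind_\Omega}_{L^2(\Omega)}$. The first cap is the usual $L^2$ Bessel inequality for the orthonormal family $\{\phi_j\}_{j\le k}$ in $L^2(\Omega)$:
\[
F_k(\xi)\le\norm{e_\xi}_{L^2(\Omega)}^2=\frac{V}{(2\pi)^n}.
\]
This holds for every $\xi$, including $\xi=0$, which is the only term used there.

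For the second cap, take $\xi\neq0$. I would pass to the energy normalization $\psi_j=\lambda_j^{-1/2}\phi_j$. The weak eigenvalue equation $\int_\Omega\nabla\phi_i\cdot\nabla\phi_j\dd x=\lambda_j\delta_{ij}$ shows that $\{\nabla\psi_j\}_{j\le k}$ is orthonormal in $L^2(\Omega;\mathbb C^n)$. The key identity writes the plane wave as a divergence. Set
\[
G_\xi(x)=\frac{\mathrm{i}\,\xi}{\abs\xi^2}\,e_\xi(x),
\]
up to sign and conjugation conventions, so that $\diver G_\xi=e_\xi$ on $\R^n$ (the sign is to be checked against the convention that the inner product is linear in its first argument). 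Since $\phi_j\in H_0^1(\Omega)$ and $G_\xi$ is smooth and bounded on $\R^n$, weak integration by parts is legitimate with no boundary regularity. Approximating $\phi_j$ by $C_c^\infty(\Omega)$ functions gives
\[
\int_\Omega\phi_j\,\overline{\diver G_\xi}\dd x=-\int_\Omega\nabla\phi_j\cdot\overline{G_\xi}\dd x.
\]
Hence $\abs{\wh\phi_j(\xi)}=\abs{\ip{\nabla\phi_j}{G_\xi\ind_\Omega}}$.

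Now I would apply Bessel's inequality to the orthonormal family $\{\nabla\psi_j\}$ against the vector $G_\xi\ind_\Omega$:
\[
\sum_{j=1}^k\frac{\abs{\wh\phi_j(\xi)}^2}{\lambda_j}\le\norm{G_\xi}_{L^2(\Omega)}^2=\frac{V}{(2\pi)^n\abs\xi^2}.
\]
Since $\lambda_j\le\lambda_k$ for $j\le k$, the left side is at least $\lambda_k^{-1}F_k(\xi)$, which gives $F_k(\xi)\le V\lambda_k/((2\pi)^n\abs\xi^2)$. Taking the minimum of the two caps yields \cref{eq:projection-envelope} for every $\xi$, so in particular almost everywhere.

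The main obstacle is only technical: justifying the integration by parts for an arbitrary open set of finite measure. I would handle it by density of $C_c^\infty(\Omega)$ in $H_0^1(\Omega)$. Both sides of the identity are continuous in the $H^1$ norm, because $G_\xi$ and $e_\xi$ lie in $L^2(\Omega)$ when $\abs\Omega<\infty$. For test functions the identity is just the distributional definition of the divergence. The rest is bookkeeping of conjugations and constants.
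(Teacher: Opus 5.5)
Your proposal is correct and follows the paper's proof essentially step for step. Both arguments use the $L^2$ Bessel cap, the energy-orthonormal family $\nabla\phi_j/\sqrt{\lambda_j}$, the divergence representation of the plane wave with weak integration by parts justified by density in $H_0^1(\Omega)$, and the vector-valued Bessel inequality combined with $\lambda_j\le\lambda_k$. With your choice of $G_\xi$ one actually gets $\diver G_\xi=-e_\xi$ rather than $e_\xi$, but this is harmless because only $\abs{\wh\phi_j(\xi)}$ enters.
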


\begin{proof}
Regard $(2\pi)^{-n/2}e^{\mathrm{i}x\cdot\xi}$ as an element of
$L^2(\Omega)$. Bessel's inequality gives \cite{LiYau1983}
\begin{equation}\label{eq:l2-bessel}
F_k(\xi)\leq(2\pi)^{-n}\int_\Omega1\dd x
=\frac{V}{(2\pi)^n}.
\end{equation}

To obtain the second bound, we use the weak formulation of
\cref{eq:eigenproblem}. The vector fields
\begin{equation}\label{eq:energy-orthogonal}
e_j=\frac{\nabla\phi_j}{\sqrt{\lambda_j}},
\qquad j\geq1,
\end{equation}
form an orthonormal family in $L^2(\Omega;\mathbb C^n)$. Fix $\xi\neq0$ and define
\[
X_\xi(x)=\frac{\mathrm{i}\xi}{\abs\xi^2}e^{-\mathrm{i}x\cdot\xi};
\qquad
\diver X_\xi=e^{-\mathrm{i}x\cdot\xi}.
\]
Because $\phi_j\in H_0^1(\Omega)$, weak integration by parts gives
\[
\wh\phi_j(\xi)
=-(2\pi)^{-\frac n2}\int_\Omega
\nabla\phi_j(x)\cdot X_\xi(x)\dd x.
\]
Consequently,
\begin{align*}
F_k(\xi)
&=(2\pi)^{-n}\sum_{j=1}^k
\lambda_j\left|\ip{e_j}{\overline{X_\xi}}_{L^2(\Omega;\mathbb C^n)}\right|^2\\
&\leq(2\pi)^{-n}\lambda_k
\sum_{j=1}^k\left|\ip{e_j}{\overline{X_\xi}}\right|^2\\
&\leq(2\pi)^{-n}\lambda_k\norm{X_\xi}_{L^2(\Omega)}^2
=\frac{V}{(2\pi)^n}\frac{\lambda_k}{\abs\xi^2},
\end{align*}
where the last inequality is the vector-valued $L^2$ Bessel inequality.
Combining this estimate with \cref{eq:l2-bessel} proves
\cref{eq:projection-envelope}.
\end{proof}

\begin{remark}\label{rem:source-improvement}
The constant bound \cref{eq:l2-bessel}, together with
\cref{eq:mass-moment}, recovers the classical Li--Yau inequality. The additional
constraint is active in the region
$\abs\xi>\sqrt{\lambda_k}$.
\revise{When the classical filling ball extends beyond
$\abs{\xi}=\sqrt{\lambda_k}$, the additional cap reduces the
available capacity there and forces the
second-moment-minimizing density to occupy a larger ball.}
\end{remark}

\begin{remark}[A zero-mean refinement]\label{rem:zero-mean-refinement}
The energy estimate admits a further refinement. Since
$e_j=\nabla\phi_j/\sqrt{\lambda_j}$ and $\phi_j\in H_0^1(\Omega)$,
approximation by compactly supported smooth functions gives
\[
 \int_\Omega e_j(x)\dd x=0.
\]
For $\xi\ne0$, define
\[
 X_\xi(x)=\frac{\mathrm{i}\xi}{|\xi|^2}e^{-\mathrm{i}x\cdot\xi},
 \qquad
 Z_\xi=X_\xi-\frac1V\int_\Omega X_\xi\dd x,
 \qquad
 m_\Omega(\xi)=\int_\Omega e^{-\mathrm{i}x\cdot\xi}\dd x.
\]
Weak integration by parts and the zero-mean identity give
\[
 \wh\phi_j(\xi)
 =-(2\pi)^{-n/2}\sqrt{\lambda_j}\int_\Omega e_j\cdot X_\xi\dd x
 =-(2\pi)^{-n/2}\sqrt{\lambda_j}\int_\Omega e_j\cdot Z_\xi\dd x.
\]
The last integral is
$\langle e_j,\overline{Z_\xi}\rangle_{L^2(\Omega;\mathbb C^n)}$.
The vector-valued Bessel inequality therefore yields
\begin{align*}
 F_k(\xi)
 &\leq(2\pi)^{-n}\lambda_k
       \sum_{j=1}^k
       \left|\left\langle e_j,\overline{Z_\xi}\right\rangle\right|^2\\
 &\leq(2\pi)^{-n}\lambda_k\|Z_\xi\|_{L^2(\Omega)}^2\\
 &=\frac{V}{(2\pi)^n}\frac{\lambda_k}{|\xi|^2}
   \left(1-\frac{|m_\Omega(\xi)|^2}{V^2}\right).
\end{align*}
Consequently, \cref{eq:projection-envelope} can be sharpened to
\begin{equation}\label{eq:projection-envelope-2}
 F_k(\xi)\leq \frac{V}{(2\pi)^n}
 \min\left\{1,\frac{\lambda_k}{|\xi|^2}
 \left(1-\frac{|m_\Omega(\xi)|^2}{V^2}\right)\right\}.
\end{equation}
This capacity is generally nonradial. The elementary volume-only
theorems below deliberately retain the simpler radial cap.  
With the help of geometric information, we can obtain a sharper lower bound for 
indival eigenvalue.
%In \cref{sec:inertia-envelope} we return to
%\cref{eq:projection-envelope-2}, bound its domain Fourier factor through
%the centered inertia tensor, and obtain a computable radial extension.
\end{remark}

\begin{remark}
The orthogonal family \(e_j\) is adopted in our numerical analysis for 
the augmented subspace method \cite{DangWangXieZhou2023,XieZhangOwhadi2019}. 
%numerical eigenvalue problems, for instance, in extended subspace methods. 
This also motivates our introduction of the energy orthogonality technique here.
%The family of $e_j$ in (\ref{eq:energy-orthogonal}) is used in the 
%analysis for our proposed augmented subspace for numerically solving 
%eigenvalue problems \cite{DangWangXieZhou2023,XieZhangOwhadi2019}. 
%The treatment in Remark \ref{rem:zero-mean-refinement} 
%also has the characteristic of numerical theory. 
\end{remark}
We are now in a position to combine the two ingredients. The projection
estimate supplies a radial obstacle depending on $L=\lambda_k$, and
\cref{lem:bathtub} identifies the density below that obstacle with the smallest
possible second moment. The final closure is provided by the elementary upper
bound $\sum_{j=1}^k\lambda_j\leq k\lambda_k$. Because the radial mass of an
$r^{-2}$ tail changes from a power law to a logarithm at $n=2$, the cases
$n\geq3$ and $n=2$ must be treated separately.

\section{Volume-Only Eigenvalue Bounds}\label{sec:main-results}

We now prove the main result: a strict improvement of the coefficient of
the Weyl-scale leading term in the individual Li--Yau bound.  The moment
estimate is closed using only
$\sum_{j\leq k}\lambda_j\leq k\lambda_k$, so the conclusion applies to
every open set of finite positive measure. Set
\begin{equation}\label{eq:alpha}
\alpha_n=\frac{\omega_nV}{(2\pi)^n}.
\end{equation}

\subsection{Higher dimensions}

For $n\geq3$, let $x_n>1$ be the unique solution in $(1,\infty)$ of
\begin{equation}\label{eq:xn}
(n-2)x^n-nx^{n-2}+\frac{8}{n+2}=0,
\end{equation}
and define
\begin{equation}\label{eq:Kn-cn}
K_n=\frac{nx_n^{n-2}-2}{n-2},
\qquad
c_n=K_n^{-\frac{2}{n}}.
\end{equation}
These constants arise naturally when the lower bound for the Fourier second
moment is matched with its spectral upper bound. The resulting estimate is the
following.

\begin{theorem}[Universal improvement in higher dimensions]
\label{thm:main-high}
Let $\Omega\subset\R^n$ be an open set of finite positive measure and suppose that
$n\geq3$. Then, for every $k\geq1$,
\begin{equation}\label{eq:main-high}
\lambda_k\geq c_nC_nV^{-\frac{2}{n}}k^{\frac{2}{n}}.
\end{equation}
Moreover, the computable constant $c_n$ satisfies 
\begin{equation}\label{eq:strict-comparison}
\frac{n}{n+2}<c_n<1.
\end{equation}
\end{theorem}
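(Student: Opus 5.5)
We plan to feed the envelope of \cref{prop:envelope} into \cref{lem:bathtub} and close the estimate with $\sum_{j\le k}\lambda_j\le k\lambda_k$. Fix $k$ and write $L=\lambda_k$, $s=\sqrt L$. We take
\[
h(r)=\frac{V}{(2\pi)^n}\min\{1,L/r^2\},\qquad f=F_k,\qquad M=k.
\]
Then $h$ is radial, nonincreasing and locally integrable. For $n\ge3$ its total mass is infinite, because the radial mass of the tail, $\int^\infty r^{n-3}\dd r$, diverges. Hence \cref{lem:bathtub} gives a finite filling radius $R$ with $\int_{B_R}h=k$. Combined with \cref{eq:mass-moment} it also gives
\[
\int_{B_R}\abs\xi^2h(\abs\xi)\dd\xi\le\sum_{j\le k}\lambda_j\le kL.
\]

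We split into two cases according to the position of $R$ relative to $s$.

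\emph{Case $R\le s$.} Here $k=\alpha_nR^n\le\alpha_nL^{n/2}$. Once we know $K_n>1$, this gives $k\le\alpha_nK_nL^{n/2}$.

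\emph{Case $R>s$.} Write $R=xs$ with $x>1$. Polar coordinates give the mass and the second moment explicitly:
\[
k=\alpha_n s^n\,\frac{nx^{n-2}-2}{n-2},
\qquad
\int_{B_R}\abs\xi^2h=\alpha_n s^{n+2}\Bigl(x^n-\frac{2}{n+2}\Bigr).
\]
Inserting these into the moment inequality and multiplying by $n-2$ turns it exactly into
\[
G(x):=(n-2)x^n-nx^{n-2}+\frac{8}{n+2}\le0 .
\]
We have $G(1)=8/(n+2)-2<0$ for $n\ge3$, and $G'(x)=n(n-2)x^{n-3}(x^2-1)>0$ on $(1,\infty)$. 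Therefore $x_n$ is well defined and $x\le x_n$. The map $x\mapsto(nx^{n-2}-2)/(n-2)$ is increasing, so $k\le\alpha_nK_nL^{n/2}$.

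In both cases $L\ge(k/(\alpha_nK_n))^{2/n}$. Since $\alpha_n^{-2/n}=C_nV^{-2/n}$, this is exactly \cref{eq:main-high}. The upper bound $c_n<1$ is immediate from $x_n>1$, which gives $K_n>(n-2)/(n-2)=1$; this also justifies the first case.

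The main obstacle is the lower bound $c_n>n/(n+2)$, which is equivalent to $K_n<((n+2)/n)^{n/2}$. We would treat it by a strictness argument rather than by direct algebra. Let $x^*>1$ be the point where $(nx^{n-2}-2)/(n-2)=((n+2)/n)^{n/2}$; by monotonicity of $K$ it suffices to show $G(x^*)>0$.

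At the ratio $k/(\alpha_nL^{n/2})=((n+2)/n)^{n/2}$, the classical constant cap fills the ball of radius $\sqrt{(n+2)L/n}>s$, and its second moment equals $kL$ exactly. This is the equality case of Li--Yau. The truncated envelope $h$ lies strictly below that constant on the annulus $s<\abs\xi<\sqrt{(n+2)L/n}$. Consequently the classical uniform ball density is admissible for the constant cap but is not the bathtub minimizer associated with $h$. By \cref{rem:equality} applied to the constant cap, the $h$-minimizer, having the same mass $k$, therefore has second moment strictly larger than $kL$. Translated through the formulas above, this says $G(x^*)>0$, hence $x_n<x^*$ and the strict inequality $K_n<((n+2)/n)^{n/2}$ follows. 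If this comparison proves awkward, the fallback is to verify $G(x^*)>0$ directly by elementary inequalities in $n$.
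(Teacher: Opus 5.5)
Your proposal is correct and follows the paper's proof essentially step for step. It feeds the envelope of \cref{prop:envelope} into \cref{lem:bathtub}, closes with $\sum_{j\le k}\lambda_j\le k\lambda_k$ to get $G(x)\le0$ and hence $x\le x_n$, and obtains $c_n>n/(n+2)$ from the same strictness comparison at the Li--Yau ratio $((n+2)/n)^{n/2}$. Your split on $R\le\sqrt{L}$ versus $R>\sqrt{L}$ is equivalent to the paper's split on $\rho\le1$, and phrasing the last step as $G(x^*)>0$ plus monotonicity of $G$ spells out what the paper compresses into an appeal to continuity.
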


\begin{proof}
Write $L=\lambda_k$ and introduce the dimensionless parameter
\begin{equation}\label{eq:rho}
\rho=\frac{k}{\alpha_nL^{\frac n2}}.
\end{equation}
If $\rho\leq1$, then rearrangement of this inequality immediately gives the
P\'olya bound \cref{eq:polya}, which is stronger than
\cref{eq:main-high}. We may therefore assume that $\rho>1$.

In terms of $L$, the envelope in \cref{prop:envelope} is
\[
h_L(r)=\frac{V}{(2\pi)^n}\min\left\{1,\frac{L}{r^2}\right\}.
\]
By \cref{lem:bathtub}, the admissible density of mass $k$ with the smallest
second moment fills this envelope on a ball $B_R$. Put $R=x\sqrt L$.
The condition $\rho>1$ forces $x>1$, so the filling ball extends into the
decaying part of the envelope. Direct radial integration then yields
\begin{align}
k&=\alpha_nL^{\frac n2}
\frac{nx^{n-2}-2}{n-2},\label{eq:mass-high}\\
\sum_{j=1}^k\lambda_j
&\geq\alpha_nL^{1+\frac n2}
\left(x^n-\frac{2}{n+2}\right).
\label{eq:moment-high}
\end{align}
On the other hand, monotonicity of the eigenvalues gives the complementary
upper bound
\begin{equation}\label{eq:upper-moment}
\sum_{j=1}^k\lambda_j\leq k\lambda_k=kL.
\end{equation}
Substituting \cref{eq:mass-high} into
\cref{eq:moment-high,eq:upper-moment} gives
\begin{equation}\label{eq:root-inequality}
(n-2)x^n-nx^{n-2}+\frac{8}{n+2}\leq0.
\end{equation}
The derivative of the left-hand side is
\[
n(n-2)x^{n-3}(x^2-1)>0
\qquad (x>1).
\]
The same function is negative at $x=1$ and tends to $+\infty$ as
$x\to\infty$. Hence it has exactly one zero $x_n$ in $(1,\infty)$, and
\cref{eq:root-inequality} implies $x\leq x_n$. Using
\cref{eq:mass-high}, we therefore obtain
\[
\rho=\frac{nx^{n-2}-2}{n-2}\leq K_n.
\]
Returning to \cref{eq:rho} and using \cref{eq:weyl-constant}, we conclude that
\[
L\geq\left(\frac{k}{\alpha_nK_n}\right)^{\frac2n}
=c_nC_nV^{-\frac2n}k^{\frac2n}.
\]

It remains to locate the coefficient relative to the Li--Yau and P\'olya
constants. Since $K_n>1$, one immediately has $c_n<1$. To see that the
improvement over Li--Yau is strict without resorting to a lengthy algebraic
comparison, suppose that $L$ equals the right-hand side of
\cref{eq:li-yau-individual}. For the classical constant envelope, the bathtub
lower bound then equals $kL$. At that value of $L$,
\[
\rho=\left(\frac{n+2}{n}\right)^{\frac{n}{2}}>1,
\]
so the minimizing ball necessarily reaches the region
$\abs\xi>\sqrt L$. There the new envelope is strictly below the classical
constant one. Consequently, the smallest second moment of a density with mass
$k$ is strictly greater than $kL$, contradicting
\cref{eq:upper-moment}. Continuity now gives $c_n>n/(n+2)$ and completes the
proof.
\end{proof}

\subsection{The critical two-dimensional case}

When $n=2$, the radial mass of the $r^{-2}$ tail grows logarithmically, so the
power-law calculation above has to be replaced by a separate computation. Let
$y_2>1$ be the unique solution in $(1,\infty)$ of
\begin{equation}\label{eq:y2}
y-\log y=\frac32,
\end{equation}
and define
\begin{equation}\label{eq:c2}
K_2=1+\log y_2=y_2-\frac12,
\qquad
c_2=K_2^{-1}.
\end{equation}

\begin{theorem}[Universal improvement in dimension two]
\label{thm:main-2d}
Let $\Omega\subset\R^2$ be an open set of finite positive measure. Then, for every
$k\geq1$,
\begin{equation}\label{eq:main-2d}
\lambda_k\geq c_2\frac{4\pi k}{V},
\qquad c_2=0.5383068077\ldots.
\end{equation}
In particular, the coefficient improves the individual Li--Yau bound
$\lambda_k\geq2\pi k/V$ by approximately $7.66\%$.
\end{theorem}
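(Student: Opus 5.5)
The plan is to repeat the proof of \cref{thm:main-high} verbatim up to the radial integration, replacing the power-law tail mass by its logarithmic counterpart. Write $L=\lambda_k$. For $n=2$ one has $\alpha_2=\omega_2V/(2\pi)^2=V/(4\pi)$, and we put $\rho=k/(\alpha_2L)$. If $\rho\leq1$, then $L\geq 4\pi k/V$, which is P\'olya's bound \cref{eq:polya} and is stronger than \cref{eq:main-2d}. So assume $\rho>1$. By \cref{prop:envelope}, $F_k$ lies below $h_L(r)=\frac{V}{4\pi^2}\min\{1,L/r^2\}$. This envelope is nonincreasing and locally integrable, and its total integral is infinite in dimension two, so certainly at least $k$. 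Hence \cref{lem:bathtub} applies with $M=k$ and gives a filling radius $R=x\sqrt L$. Since the constant part of the envelope only carries mass $\alpha_2L<k$, we must have $x>1$.

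Next I would carry out the polar-coordinate integrals.
\begin{itemize}
\item The constant part of the envelope contributes mass $\alpha_2L$ and second moment $\alpha_2L^2/2$.
\item On $\sqrt L<r<x\sqrt L$, the tail contributes mass $2\pi\frac{V}{4\pi^2}L\int_{\sqrt L}^{x\sqrt L}\frac{dr}{r}=2\alpha_2L\log x$.
\item The same tail contributes second moment $2\pi\frac{V}{4\pi^2}L\int_{\sqrt L}^{x\sqrt L}r\,dr=\alpha_2L^2(x^2-1)$.
\end{itemize}
Therefore
\[
k=\alpha_2L\,(1+2\log x),\qquad \sum_{j=1}^k\lambda_j\geq\alpha_2L^2\Bigl(x^2-\tfrac12\Bigr).
\]

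Combining this with $\sum_{j\leq k}\lambda_j\leq kL$ and dividing by $\alpha_2L^2$ gives $x^2-\frac12\leq1+2\log x$. Setting $y=x^2>1$, this becomes $y-\log y\leq\frac32$. The function $y\mapsto y-\log y$ has derivative $1-1/y>0$ on $(1,\infty)$, equals $1$ at $y=1$, and tends to $\infty$. Hence \cref{eq:y2} has a unique root $y_2>1$, and our inequality forces $y\leq y_2$. Consequently
\[
\rho=1+\log y\leq1+\log y_2=K_2,
\]
and the identity $1+\log y_2=y_2-\frac12$ is just \cref{eq:y2}. Unwinding the definition of $\rho$ gives $L\geq k/(\alpha_2K_2)=c_2\,4\pi k/V$.

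It remains to locate the constant. Since $y_2>1$, we get $K_2>1$ and so $c_2<1$. To compare with Li--Yau, I would check directly that $K_2<2$, i.e.\ $y_2<\frac52$. This holds because $\frac52-\log\frac52\approx1.584>\frac32$, and $y-\log y$ is increasing on $(1,\infty)$. Thus $c_2>\frac12$, which is exactly the individual Li--Yau coefficient $2\pi k/V=\frac12\cdot4\pi k/V$. The only genuinely delicate step is the numerical one: solving $y-\log y=\frac32$ by Newton's method to enough digits to certify $c_2=0.5383068077\ldots$. The percentage then follows from $c_2/(1/2)=2c_2\approx1.0766$.
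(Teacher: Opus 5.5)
Your proposal is correct and follows the paper's proof essentially step for step. It uses the same P\'olya/non-P\'olya case split, the same bathtub filling of the envelope $\frac{V}{4\pi^2}\min\{1,L/r^2\}$, the same mass and moment identities (your $R=x\sqrt L$ with $y=x^2$ is just the paper's $R^2=yL$), and the same monotonicity argument for $y-\log y\le\frac32$. Your explicit check that $y_2<\frac52$, hence $K_2<2$ and $c_2>\frac12$, is a small welcome addition, since the paper reads the comparison with Li--Yau off the numerical value of $c_2$.
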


\begin{proof}
Again write $L=\lambda_k$, and note that $\alpha_2=V/(4\pi)$. If
$k\leq\alpha_2L$, then the P\'olya bound already follows. Otherwise, write the
saturation radius as $R^2=yL$ with $y>1$. Radial integration gives
\begin{align}
k&=\alpha_2L(1+\log y),\label{eq:mass-2d}\\
\sum_{j=1}^k\lambda_j
&\geq\alpha_2L^2\left(y-\frac12\right).
\label{eq:moment-2d}
\end{align}
Combining \cref{eq:upper-moment,eq:mass-2d,eq:moment-2d}, we obtain
\[
y-\frac12\leq1+\log y,
\qquad\text{or equivalently}\qquad
y-\log y\leq\frac32.
\]
The function $y-\log y$ is strictly increasing for $y>1$. Hence
$y\leq y_2$, and therefore
\[
\frac{k}{\alpha_2L}=1+\log y\leq K_2.
\]
Rearranging proves \cref{eq:main-2d}. The stated numerical value follows from
\cref{eq:y2,eq:c2}.
\end{proof}

\subsection{Numerical comparison of the constants}

To illustrate the size of the improvement, \cref{tab:constants} lists the
coefficients in several dimensions. The relative gain is defined as
\[
\left(\frac{c_n}{n/(n+2)}-1\right)\times100\%.
\]
The displayed gains decrease across the sampled dimensions. Strict
positivity in every dimension $n\geq2$ follows from the preceding theorems.

\begin{table}[ht]
\centering
\caption{Comparison between the coefficient obtained here and the Li--Yau
coefficient.}\label{tab:constants}
\begin{tabular}{@{}cccc@{}}
\toprule
$n$ & Li--Yau coefficient $n/(n+2)$ & Present coefficient $c_n$
& Relative gain \\
\midrule
2  & 0.500000 & 0.538307 & 7.66\% \\
3  & 0.600000 & 0.622248 & 3.71\% \\
4  & 0.666667 & 0.681250 & 2.19\% \\
5  & 0.714286 & 0.724595 & 1.44\% \\
6  & 0.750000 & 0.757678 & 1.02\% \\
8  & 0.800000 & 0.804735 & 0.59\% \\
10 & 0.833333 & 0.836544 & 0.39\% \\
\bottomrule
\end{tabular}
\end{table}

\begin{remark}[Interpretation of the main contribution]
The improvement in \cref{thm:main-high,thm:main-2d} is a leading-term
improvement for an \emph{individual} eigenvalue: the universal
coefficient multiplying $C_nV^{-2/n}k^{2/n}$ increases from
$n/(n+2)$ to $c_n$.  The power of $k$ and the volume scaling are
unchanged.  
This statement must not be confused with an improvement of
the leading coefficient in the Li--Yau sum inequality
\cref{eq:li-yau-sum}, whose coefficient is already asymptotically sharp.
\end{remark}

\section{Conclusions and Further Directions}\label{sec:questions}

The principal result of this paper is the universal estimate
\[
 \lambda_k\geq c_nC_nV^{-2/n}k^{2/n},
 \qquad \frac{n}{n+2}<c_n<1,
\]
valid for every index on every finite-measure open set in dimension
$n\geq2$.  Its main novelty is the strict improvement from $n/(n+2)$
to $c_n$ in the leading coefficient of the individual Li--Yau bound.
The Weyl-scale power of $k$ and the volume dependence are preserved, and
no boundary regularity is required.

The improvement comes from retaining energy orthogonality in Fourier
space.  In addition to the classical constant Bessel cap, the Dirichlet
condition produces a cap proportional to
$\lambda_k|\xi|^{-2}$.  Their minimum reduces the high-frequency
capacity available to the Fourier density; the bathtub principle then
forces the mass-minimizing density outward and raises its second moment.
This mechanism is elementary, scale invariant, and distinct from
lower-order geometric remainder terms.

%The scope of the result is equally important. 
%The coefficient in the
%Li--Yau sum inequality is already sharp and is not improved here.
%Moreover, $c_n<1$, so the theorem does not reach the P\'{o}lya
%coefficient.
Recently, it is well known that the coefficient-one bounds on balls \cite{FLPSBalls2023}, 
thin products \cite{HeWang2026}, 
or other special classes remain stronger under their hypotheses, 
while boundary-sensitive and Riesz-mean estimates retain information that a
volume-only theorem cannot encode. 
A natural and particularly promising direction is to combine the
present Fourier envelope with additional geometric information,
counting-function techniques, and upper bounds for individual
eigenvalues or eigenvalue sums. Under the corresponding geometric or
regularity assumptions, these inputs are likely to sharpen the resulting
eigenvalue lower bounds. Representative sources of such information
include geometric and boundary corrections
\cite{Melas2003,Ilyin2010,KVW2009,GLW2011}, counting-function and Weyl
remainder estimates~\cite{FrankLarson2025,FrankLarson2026,GJL2025,JiangLin2026}, 
spectral-gap information~\cite{Steinerberger2024}, and suitable upper bounds for
individual eigenvalues or their sums. Other possible extensions include
nonradial Fourier constraints and quantitative orthogonality beyond a
single energy level. Reaching the P\'{o}lya
coefficient through this framework will ultimately require a genuinely
stronger principal-scale restriction, rather than only a lower-order
correction. The techniques here also has the potential application to 
other types of eigenvalue problems. 

\backmatter

\section*{Statements and Declarations}

\noindent\textbf{Funding.}
%The authors should insert the applicable funding information before
%submission.
This work was supported by the National Key Research and Development 
Program of China (2025YFA1016600, 2025YFA1016601),  
National Natural Science Foundations of China (12331015), 
and National Center for Mathematics and Interdisciplinary Science, CAS.

\noindent\textbf{Competing interests.}
The authors declare that there is no conflict of interest. 

\noindent\textbf{Data availability.}
This article is theoretical and does not use external datasets.

%\noindent\textbf{Code availability.}
%No research code is required for the proofs.

%\Needspace{5\baselineskip}
%\noindent\textbf{Author contributions.}
%Yifan Wang contributed to the discussion and verification of the
%derivations and to the joint writing of the manuscript. Hehu Xie
%proposed the original method, designed the overall strategy, and wrote
%and revised the initial draft of the manuscript.


\begin{thebibliography}{99}

\bibitem{Berezin1972}
F.~A. Berezin,
Covariant and contravariant symbols of operators,
\emph{Izv. Akad. Nauk SSSR Ser. Mat.} \textbf{36} (1972), 1134--1167.
\href{https://doi.org/10.1070/IM1972V006N05ABEH001913}{doi:10.1070/IM1972V006N05ABEH001913}.


\bibitem{FLP2025}
R.~L. Frank, S. Larson and P. Pfeiffer,
Improved semiclassical eigenvalue estimates for the Laplacian and the Landau Hamiltonian,
\emph{J. Spectr. Theory} \textbf{16} (2026), no.~1, 243--270.
\href{https://doi.org/10.4171/JST/589}{doi:10.4171/JST/589}.

\bibitem{FrankLarson2025}
R.~L. Frank and S. Larson,
Riesz means asymptotics for Dirichlet and Neumann Laplacians on
Lipschitz domains,
\emph{Invent. Math.} \textbf{241} (2025), 999--1079.
\href{https://doi.org/10.1007/s00222-025-01352-x}%
{doi:10.1007/s00222-025-\allowbreak\mbox{01352-x}}.


\bibitem{FrankLarson2026}
R.~L. Frank and S. Larson,
Semiclassical inequalities for Dirichlet and Neumann Laplacians on convex domains,
\emph{Comm. Pure Appl. Math.} \textbf{79} (2026), no.~3, 762--822.
\href{https://doi.org/10.1002/cpa.70019}{doi:10.1002/cpa.70019}.
%Preprint:
%\href{https://arxiv.org/abs/2410.04769}{arXiv:2410.04769}.

\bibitem{GJL2025}
Z. Gan, R. Jiang and F. Lin,
Improved Berezin--Li--Yau inequality and Kr\"oger inequality and consequences,
\href{https://arxiv.org/abs/2507.20330}{arXiv:2507.20330}, 2025.

\bibitem{GLW2011}
L. Geisinger, A. Laptev and T. Weidl,
Geometrical versions of improved Berezin--Li--Yau inequalities,
\emph{J. Spectr. Theory} \textbf{1} (2011), 87--109.
\href{https://arxiv.org/abs/1010.2683}{arXiv:1010.2683}.

\bibitem{Ilyin2010}
A.~A. Ilyin,
Lower bounds for the spectrum of the Laplace and Stokes operators,
\emph{Discrete Contin. Dyn. Syst.} \textbf{28} (2010), 131--146.
\href{https://arxiv.org/abs/0909.2818}{arXiv:0909.2818}.

\bibitem{KVW2009}
H. Kova\v{r}\'{\i}k, S. Vugalter and T. Weidl,
Two-dimensional Berezin--Li--Yau inequalities with a correction term,
\emph{Commun. Math. Phys.} \textbf{287} (2009), 959--981.
\href{https://arxiv.org/abs/0802.2792}{arXiv:0802.2792}.

\bibitem{LiYau1983}
P. Li and S.-T. Yau,
On the Schr\"odinger equation and the eigenvalue problem,
\emph{Commun. Math. Phys.} \textbf{88} (1983), 309--318.
\href{https://doi.org/10.1007/BF01213210}{doi:10.1007/BF01213210}.

\bibitem{LiebLoss2001}
E.~H. Lieb and M. Loss,
\emph{Analysis}, 2nd ed.,
Graduate Studies in Mathematics, Vol.~14, American Mathematical Society, 2001.

\bibitem{Melas2003}
A.~D. Melas,
A lower bound for sums of eigenvalues of the Laplacian,
\emph{Proc. Amer. Math. Soc.} \textbf{131} (2003), 631--636.

\bibitem{Polya1961}
G. P\'olya,
On the eigenvalues of vibrating membranes,
\emph{Proc. London Math. Soc.} \textbf{11} (1961), 419--433.

\bibitem{SafarovVassiliev1997}
Yu. Safarov and D. Vassiliev,
\emph{The Asymptotic Distribution of Eigenvalues of Partial Differential
Operators},
Translations of Mathematical Monographs, vol.~155,
American Mathematical Society, Providence, RI, 1997.

\bibitem{Steinerberger2024}
{\raggedright S. Steinerberger,
Universal lower bounds for Dirichlet eigenvalues,
\href{https://arxiv.org/abs/2405.16354}{arXiv:2405.16354}, 2024;
comparisons use version~2.\par}

\bibitem{FLPSBalls2023}
N. Filonov, M. Levitin, I. Polterovich and D.~A. Sher,
P\'olya's conjecture for Euclidean balls,
\emph{Invent. Math.} \textbf{234} (2023), 129--169.
\href{https://doi.org/10.1007/s00222-023-01198-1}{doi:10.1007/s00222-023-01198-1}.

%\Needspace{6\baselineskip}
\bibitem{HeWang2026}
X. He and Z. Wang,
P\'olya's conjecture for thin products,
\emph{Int. Math. Res. Not.} \textbf{2026} (2026), no.~16, rnag182.
\href{https://doi.org/10.1093/imrn/rnag182}{doi:10.1093/imrn/rnag182}.
Accessible manuscript:
\href{https://arxiv.org/abs/2402.12093}{arXiv:2402.12093}.

\bibitem{JiangLin2026}
R. Jiang and F. Lin,
P\'olya's conjecture up to $\varepsilon$-loss and quantitative estimates
for the remainder of Weyl's law,
\emph{Comm. Pure Appl. Math.} (2026), e70058, Early View.
\href{https://doi.org/10.1002/cpa.70058}{doi:10.1002/cpa.70058}.
Preprint: \href{https://arxiv.org/abs/2507.04307v4}{arXiv:2507.04307v4}.
Theorem and equation numbers used here refer to this version.



\bibitem{DangWangXieZhou2023}
H. Dang, Y. Wang, H. Xie and C. Zhou,
Enhanced error estimates for augmented subspace method,
\emph{J. Sci. Comput.} \textbf{94} (2023), no.~2, Article~40.
\href{https://doi.org/10.1007/s10915-022-02090-5}
{doi:10.1007/s10915-022-02090-5}.

\bibitem{XieZhangOwhadi2019}
H. Xie, L. Zhang and H. Owhadi,
Fast eigenpairs computation with operator adapted wavelets and
hierarchical subspace correction,
\emph{SIAM J. Numer. Anal.} \textbf{57} (2019), no.~6, 2519--2550.
\href{https://doi.org/10.1137/18M1194079}
{doi:10.1137/18M1194079}.
\end{thebibliography}
\end{document}